\def\Z{\mathbb{Z}}
\def\({\left(}
\def\){\right)}
\def\SL{\operatorname{SL}}
\def\ZL{\Z_{(\ell)}}
\newcommand{\pmfrac}[2]{\left(\frac{#1}{#2}\right)}
\renewcommand{\(}{\left(}
\renewcommand{\)}{\right)}
\DeclareMathOperator{\spt}{spt}
\let\temp\phi
\let\phi\varphi
\let\varphi\temp
\newtheorem{theorem}{Theorem}
\newtheorem{lemma}[theorem]{Lemma}
\newtheorem{proposition}[theorem]{Proposition}
\theoremstyle{remark}
\numberwithin{equation}{section}
\begin{document}


\title[Congruences for a mock modular form on $\SL_2(\Z)$ ]{Congruences for a mock modular form on $\SL_2(\Z)$ and the smallest parts function}

\date{\today}
\author{Scott Ahlgren}
\address{Department of Mathematics\\
University of Illinois\\
Urbana, IL 61801} 
\email{sahlgren@illinois.edu} 

\author{Byungchan Kim}
\address{School of Liberal Arts \\ Seoul National University of Science and Technology \\ 232 Gongneung-ro, Nowon-gu, Seoul 01811, Korea}
\email{bkim4@seoultech.ac.kr} 
\subjclass[2010]{11F33, 11F37, 11P83}
\keywords{mock modular forms, smallest parts function, modular forms modulo $\ell$}
\thanks{The first author was  supported by a grant from the Simons Foundation (\#426145 to Scott Ahlgren). Byungchan Kim was supported by the Basic Science Research Program through the National Research Foundation of Korea (NRF) funded by the Ministry of Education (NRF-2016R1D1A1A09917344)}

 
\begin{abstract}
Using a family of mock modular forms constructed by Zagier, we study the coefficients of a mock modular form of weight $3/2$ on $\SL_2(\Z)$ modulo primes $\ell\geq 5$.  These coefficients are related to the smallest parts function  of Andrews.
As an application, we reprove a theorem of Garvan regarding the properties of this function modulo $\ell$.
As another application, we  show that congruences modulo $\ell$  for the smallest parts function are rare in a precise sense.

%

 \end{abstract}


\maketitle

\section{Introduction}
Let $\spt(n)$ denote the number of smallest parts in the partitions of $n$.  
This function has been the subject of much recent research.  See, for example, \cite{AA2,  ABL, AK,  And1, AGL, Gar1, Gar2, Ono1} and the references in these papers.
The best known arithmetic properties of $\spt$ are the  congruences of Andrews \cite{And1}:
\begin{equation}\label{eq:Andrews_cong}
\begin{aligned}
\spt \pmfrac{5 n+1}{24} &\equiv 0 \pmod{5}, \\
\spt \pmfrac{7 n+1}{24} &\equiv 0 \pmod{7}, \\
\spt \pmfrac{13 n+1}{24} & \equiv 0 \pmod{13}
\end{aligned}
\end{equation}
(here $\spt(n)$ is defined to be zero if $n$ is not a natural number).
Much of the interest in  $\spt$ arises from the fact that its generating function  is related to a distinguished mock modular form.
In particular,  let $p(n)$ denote the number of partitions of $n$ and define
\begin{equation*}
\begin{aligned}
f(z):=\sum  a(n)q^\frac{n}{24} 
&=  - \sum_{n=-1}^\infty \(12\spt\pmfrac{n+1}{24} + n p\pmfrac{n+1}{24} \) q^\frac{n}{24}\\
&=q^\frac{-1}{24}\(1-35 q-130 q^2-273 q^3+\cdots\)\   \ \ \  q:=e^{2\pi i z}.
\end{aligned}\end{equation*}
Then $f$ is a mock modular form of weight $3/2$ on $\SL_2(\Z)$ (see the next section for details).

Suppose that  $\ell\geq 5$ is prime.
Improving a result  of Bringmann, Garvan and Mahlburg \cite{BGM}, Garvan \cite{Gar1} 
identified each generating function 
\[\sum \spt\pmfrac{\ell n+1}{24}q^\frac{n}{24}\pmod\ell\]
as a modular form modulo $\ell$ of low weight on $\SL_2(\Z)$.  
To state his result, we introduce some notation.
let $\ZL$ denote the set of $\ell$-integral rational numbers,
and for each integer $k$ let $M_k$ denote the space of modular forms on $\SL_2(\Z)$ whose coefficients lie in $\ZL$.
Define the reduction $\overline g$ of 
  $g=\sum b(n)q^\frac{n}{24}\in \ZL[\![q^\frac1{24}]\!]$ coefficientwise,  and  define
  $\overline M_k$ as the set of  reductions of elements of $M_k$.  
   Define the Dedekind eta-function  
\[\eta(z):=q^\frac1{24}\prod_{n=1}^\infty (1-q^n),\]
and for each prime $\ell\geq 5$  define 
\begin{equation}\label{eq:rl_def}
r_\ell\in \{1, \dots, 23\} \ \ \text{ by }\ \   r_{\ell} \equiv -\ell \pmod{24}.
\end{equation}
Garvan  \cite[Corollary 4.2]{Gar2} proved the following 
\begin{theorem}[Garvan]\label{main_thm}
Suppose that   $\ell \geq 5$ is prime. Then  
\[
\sum \overline{\spt\pmfrac{\ell n+1}{24}}q^\frac{n}{24} \in \overline{\eta}^{r_{\ell}}  \overline{M}_{\frac{\ell - r_{\ell}}{2} +1}.
\]
\end{theorem}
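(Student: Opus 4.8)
The plan is to deduce the theorem from a study of the coefficients $a(n)$ of the mock modular form $f$ modulo $\ell$. First I would record the elementary congruence linking $\spt$ to these coefficients: since
\[
a(\ell n)=-12\,\spt\pmfrac{\ell n+1}{24}-\ell n\,p\pmfrac{\ell n+1}{24}\equiv -12\,\spt\pmfrac{\ell n+1}{24}\pmod{\ell}
\]
and $12$ is a unit modulo $\ell$, the series in the theorem equals $-\tfrac{1}{12}$ times the reduction of $f\mid U_\ell:=\sum a(\ell n)q^{n/24}$, so it suffices to prove $\overline{f\mid U_\ell}\in\overline{\eta}^{\,r_\ell}\overline{M}_{(\ell-r_\ell)/2+1}$. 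I would also note here that $a(\ell n)\neq 0$ forces $\ell n\equiv -1\pmod{24}$; since $\ell^2\equiv 1\pmod{24}$ this gives $n\equiv -\ell\equiv r_\ell\pmod{24}$, so every exponent of $f\mid U_\ell$ lies in $\tfrac{r_\ell}{24}+\Z$. This is precisely the support of $\overline{\eta}^{\,r_\ell}$, which both explains that factor and shows that dividing by it returns a $q$-series in integral powers.

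Next I would invoke the mock modularity. The completion $\widehat{f}=f+f^{-}$ is a harmonic Maass form of weight $3/2$ on $\SL_2(\Z)$ whose shadow is a multiple of the weight $1/2$ unary theta function $\eta=\sum_{m\ge 1}\leg{12}{m}q^{m^2/24}$; here I would use Zagier's explicit family of weight $3/2$ mock modular forms to pin down $f^{-}$ and its Fourier expansion exactly. The essential feature is that the shadow is supported only on the squares $m^2$.

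The heart of the argument is to show that $\overline{f\mid U_\ell}$ is the reduction of a holomorphic modular form. Applying $U_\ell$ to the whole completion and commuting it past the decomposition $\widehat{f}=f+f^{-}$, I would examine the non-holomorphic part: because the shadow is supported on squares, $U_\ell$ retains only the indices $m^2$ with $\ell\mid m^2$, i.e.\ $m=\ell m'$, and the corresponding non-holomorphic Fourier coefficients carry the factor $m=\ell m'$ coming from the $n^{k-1}=n^{1/2}$ weighting in the weight $3/2$ Eichler integral. Hence $f^{-}\mid U_\ell\equiv 0\pmod{\ell}$, so $\overline{f\mid U_\ell}$ agrees with the reduction of a weakly holomorphic form. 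Combined with the remark from the first step that $f\mid U_\ell$ has no terms of negative exponent (the only negative index with $a\neq 0$ is $-1$, and $\ell n=-1$ is impossible), this shows $\overline{f\mid U_\ell}$ is the reduction of a holomorphic modular form. Writing $\overline{f\mid U_\ell}=\overline{\eta}^{\,r_\ell}\,\overline{g}$, it remains to show that $\overline{g}$ has filtration at most $(\ell-r_\ell)/2+1$; I would extract this from a filtration argument, noting that this weight exceeds the naive value $(3-r_\ell)/2$ obtained from $\operatorname{wt}(f\mid U_\ell)=3/2$ by exactly $(\ell-1)/2$, the characteristic weight shift of $U_\ell$ acting on half-integral weight forms modulo $\ell$.

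I expect the main obstacle to be the delicate weight regime. The shadow $\eta$ has weight $1/2<2$, so the familiar results guaranteeing that $U_\ell$ annihilates the non-holomorphic part of a harmonic Maass form modulo $\ell$ --- which typically require a cuspidal shadow of weight $\ge 2$, or nonpositive weight --- do not apply directly. It is exactly here that Zagier's family is indispensable: it furnishes the precise shape of $f^{-}$, which is what lets one see the factor of $\ell$ and verify the vanishing modulo $\ell$. A secondary difficulty is the exact determination of the weight $(\ell-r_\ell)/2+1$: the support computation fixes the $\overline{\eta}^{\,r_\ell}$ factor unambiguously, but the precise filtration must be read off from the interaction of $U_\ell$ with the mod $\ell$ theory rather than from the naive weight of $f$.
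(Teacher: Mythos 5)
Your opening reductions are sound and agree with the paper: $a(\ell n)\equiv -12\,\spt\pmfrac{\ell n+1}{24}\pmod\ell$, and the support computation showing that every exponent of $f\mid U_\ell$ lies in $\tfrac{r_\ell}{24}+\Z$ is exactly how the factor $\overline{\eta}^{\,r_\ell}$ gets accounted for. But the heart of your argument has two genuine gaps. First, the claim that $f^{-}\mid U_\ell\equiv 0\pmod{\ell}$ is not meaningful as written: the Fourier coefficients of the non-holomorphic part are incomplete Gamma values, not $\ell$-integral rational numbers, so one cannot ``see the factor of $\ell$'' and reduce. Making this rigorous (via holomorphic projection or a period-integral argument) is possible in principle, but it delivers only that $\overline{f\mid U_\ell}$ agrees with a weakly holomorphic form of weight $3/2$ with eta-multiplier on $\Gamma_0(\ell)$ --- precisely the half-integral-weight, level-$\ell$ setting of Bringmann--Garvan--Mahlburg and of Garvan's original proof, which this paper is structured to avoid. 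Second, and more seriously, the weight bound $\tfrac{\ell-r_\ell}{2}+1$ is the entire content of the theorem (it is what forces the Andrews congruences at $\ell=5,7,13$), and your justification --- ``the characteristic weight shift of $U_\ell$ acting on half-integral weight forms modulo $\ell$'' --- is not an available result you can cite. The filtration machinery (Lemma~\ref{filtprop}) applies to integral-weight forms on $\SL_2(\Z)$; to invoke it one must first manufacture such a form, which your argument never does.

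The paper's route supplies exactly these missing ingredients without ever touching $f^{-}$ or level $\ell$. Zagier's Rankin--Cohen identities (Proposition~\ref{thm:Zag}), reduced mod $\ell$, show that $f_\ell=\eta\bigl(f-\pmfrac{-24}{\ell}\Theta^{\frac{\ell-1}{2}}f\bigr)$ is congruent to a cusp form of weight $\ell+1$ on $\SL_2(\Z)$ (Theorem~\ref{prop:main_prop}); this is where Zagier's family actually enters, not to ``pin down $f^{-}$.'' One then multiplies by $\Delta^{(\ell^2-1)/24}$ to reach integral weight, applies $U_\ell$, observes that $\Theta^{\frac{\ell-1}{2}}f\mid U_\ell\equiv 0\pmod\ell$ so that $\overline{g_\ell}\mid U_\ell=\overline{f}\mid U_\ell\cdot\overline{\eta}^{\,\ell}$, and uses $w(g\mid U_\ell)\le \ell+\tfrac{w(g)-1}{\ell}$ together with $w(g_\ell)\equiv 2\pmod{\ell-1}$ to pin the filtration at $\ell+1$; dividing by $\overline{\eta}^{\,\ell+r_\ell}$ (justified by your support computation) gives the stated weight. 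To salvage your approach you would need both to rigorize the action of $U_\ell$ on the completion and to prove a filtration/level-lowering statement in weight $3/2$ on $\Gamma_0(\ell)$; the paper's detour through Theorem~\ref{prop:main_prop} exists precisely because that is the hard part.
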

Note that, since 
\[M_{\frac{\ell - r_{\ell}}2+1}=\{0\} \iff \ell=5, 7, 13,\]
the congruences \eqref{eq:Andrews_cong}  of Andrews follow  from this result.
Garvan obtains a similar result  for the second rank moment $N_2 (n)$, which is defined by
$\spt (n) = np(n) - \frac12 N_2 (n)$.
His method involves a careful study of modular forms of level $\ell$.

In this paper we take a different approach.
For each prime $\ell$, define the function
\begin{equation}\label{eq:fell2}
f_{\ell} :=\eta \( f -  \pmfrac{-24}{\ell}  \Theta^{\frac{\ell-1}{2}} f \),
\end{equation}
 where  $\Theta$ is the operator defined  by the derivative $\Theta:=\frac{1}{2\pi i}\frac{d}{dz}$. Using a family of mock modular forms constructed by Zagier \cite{Zag1},  we will prove that each $f_\ell$ is congruent to 
a modular form of low weight on $\SL_2(\Z)$.  
\begin{theorem}\label{prop:main_prop}
Suppose that   $\ell \geq 5$ is prime. Then we have
\[
\overline{f_\ell } \in \overline{S}_{\ell+1},
\]
where $S_k$ denotes the space of cusp forms of weight $k$.
\end{theorem}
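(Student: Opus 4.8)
The plan is to realize $f_\ell$ modulo $\ell$ as the holomorphic part of a harmonic Maass form whose nonholomorphic part is negligible mod $\ell$. First I would pass to the completion $\widehat f=f+f^-$, a harmonic Maass form of weight $3/2$ on $\SL_2(\Z)$ carrying the multiplier of $\eta^{-1}$ (matching the support of $f$ on exponents $\equiv -1\pmod{24}$). Using Zagier's construction of the weight-$3/2$ family, I would identify the shadow $\xi_{3/2}\widehat f$ as a constant multiple of $\eta$. The key structural input is that
\[\eta=\sum_{n\geq 1}\leg{12}{n}q^{n^2/24}\]
is a unary theta function; consequently $f^-$ is supported on exponents $-n^2/24$, i.e.\ on those $n/24$ for which $-n$ is a perfect square.

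Next I would analyze the twisted theta operator appearing in \eqref{eq:fell2}. For $g=\sum b(n)q^{n/24}$ one has $\Theta^{\frac{\ell-1}{2}}g=\sum (n/24)^{\frac{\ell-1}{2}}b(n)q^{n/24}$, so Euler's criterion shows that the operator $g\mapsto g-\pmfrac{-24}{\ell}\Theta^{\frac{\ell-1}{2}}g$ multiplies the coefficient of $q^{n/24}$ by $1-\leg{-n}{\ell}$ modulo $\ell$ when $\ell\nmid n$ (and fixes it when $\ell\mid n$). Thus the operator annihilates modulo $\ell$ every coefficient with $-n$ a quadratic residue, and in particular every coefficient with $-n$ a perfect square. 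In $f$ this kills the principal term $q^{-1/24}$ (here $-n=1$), so after multiplying by $\eta=q^{1/24}+\cdots$ the series $f_\ell$ has no principal part and vanishing constant term; hence, once modularity is established, $f_\ell$ will automatically lie in the space of \emph{cusp} forms rather than merely modular forms.

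The heart of the argument is to show that $f_\ell$ is congruent to a genuine holomorphic modular form. Here I would apply the corresponding operator to the completion, replacing $\Theta$ by the iterated Maass raising operator, and form $\widehat{f_\ell}=\eta\bigl(\widehat f-\pmfrac{-24}{\ell}R^{\frac{\ell-1}{2}}\widehat f\bigr)$, a harmonic Maass form of weight $\ell+1$ whose holomorphic part is congruent to $f_\ell$ (via Bol's identity and its half-integral iterates). Because the nonholomorphic part of $\widehat{f_\ell}$ is built from the shadow $\eta$, it is supported precisely on exponents with $-n$ a perfect square, so the twist annihilates it modulo $\ell$. Consequently $\widehat{f_\ell}$ is holomorphic mod $\ell$ and its holomorphic part $f_\ell$ is congruent to an element of $\overline M_{\ell+1}$. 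Tracking weights confirms the index: $\eta f$ has weight $2$, which one promotes to weight $\ell+1$ via the Hasse invariant $E_{\ell-1}\equiv 1\pmod\ell$, while $\eta\,\Theta^{\frac{\ell-1}{2}}f$ already has weight $\ell+1$; the $\eta$-factor cancels the $\eta^{-1}$-multiplier, placing everything on $\SL_2(\Z)$ with trivial multiplier. Combined with the vanishing constant term this yields $\overline{f_\ell}\in\overline S_{\ell+1}$.

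The hard part will be this last step: rigorously showing that the nonholomorphic contribution vanishes modulo $\ell$. This requires the explicit Fourier expansion of $f^-$ in terms of incomplete gamma functions, control of the $\ell$-integrality of the coefficients produced by $R^{\frac{\ell-1}{2}}$, and the verification that these coefficients are supported exactly on the quadratic-residue exponents that the twist annihilates. Equivalently, one must check that the twisted theta operator intertwines correctly with the raising operator on the completion, so that the mock defect cancels. This is the delicate point that makes the passage from the mock object $f$ to an honest modular form modulo $\ell$ succeed, and it is where I expect the real work to lie.
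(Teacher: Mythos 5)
Your reduction of the twisted operator via Euler's criterion (the coefficient of $q^{n/24}$ is multiplied by $1-\leg{-n}{\ell}$ when $\ell\nmid n$), your observation that the shadow $\eta$ is supported on square exponents, and your explanation of why $f_\ell$ has neither principal part nor constant term are all correct and consistent with the paper. But the heart of the argument --- showing that the nonholomorphic contribution disappears modulo $\ell$ --- is exactly the step you defer, and the mechanism you propose for it does not exist. Bol's identity $D^{k-1}=(-4\pi)^{1-k}R^{k-1}$ is available only in integral weight $2-k$; for the weight-$3/2$ form $f$ the relevant exponent would be $-1/2$, so there are no ``half-integral iterates'' to invoke. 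Concretely, $R^{\frac{\ell-1}{2}}$ applied to the completion ${F}=f+f^-$ is not harmonic (it is a Laplace eigenfunction with nonzero eigenvalue), and already its action on the holomorphic part produces cross terms of the form $y^{-j}\Theta^{i}f$ rather than $\Theta^{\frac{\ell-1}{2}}f$ alone; so the object $\eta\bigl({F}-\pmfrac{-24}{\ell}R^{\frac{\ell-1}{2}}{F}\bigr)$ is not a harmonic Maass form of weight $\ell+1$ with holomorphic part $f_\ell$, and the claim that its nonholomorphic part is supported on exponents with $-n$ a perfect square and hence killed by the twist cannot be read off coefficientwise, since $R$ acts on the incomplete gamma factors as well.

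The paper supplies the missing mechanism through Zagier's Proposition~\ref{thm:Zag}: the Rankin--Cohen bracket $[{F},\eta]_n$ has genuine integral weight $2n+2$, its holomorphic projection $[f,\eta]_n+\pi_{\operatorname{hol}}([f^-,\eta]_n)$ lies in $M_{2n+2}$, and the defect $\pi_{\operatorname{hol}}([f^-,\eta]_n)$ is computed explicitly as a multiple of $F_{2n+2}$. Taking $n=\frac{\ell-1}{2}$, two elementary congruences finish the job: first, $F_{\ell+1}\equiv F_2\pmod\ell$ (Fermat's little theorem applied to $s^{k-1}$ in the definition of $F_k$), which combined with $12F_2+f\eta=E_2\equiv E_{\ell+1}\pmod\ell$ eliminates the defect; second, the bracket collapses modulo $\ell$, since $\binom{\ell/2}{(\ell-1)/2-r}\equiv 0\pmod\ell$ for $0\leq r<\frac{\ell-1}{2}$, leaving exactly the single term $\pmfrac{-24}{\ell}\Theta^{\frac{\ell-1}{2}}f\cdot\eta$. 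This also explains why the factor $\eta$ appears in the definition \eqref{eq:fell2} of $f_\ell$: it is not there merely to cancel the multiplier, it is the second entry of the bracket. To repair your outline, replace the raising-operator step by this bracket-plus-holomorphic-projection computation, or quote Zagier's result directly as the paper does.
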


As an application of Theorem~\ref{prop:main_prop}, we deduce Garvan's Theorem~\ref{main_thm} as a corollary.
As another application, we show that     congruences \eqref{eq:Andrews_cong} of the type found by Andrews are exceedingly rare.
For $\ell\geq 5$ we  say that $\spt$ has a  congruence at $\ell$ if 
\begin{equation}\label{eq:ram_cong}
\spt \pmfrac{\ell n+1}{24}  \equiv 0 \pmod{\ell}\ \ \  \text{for all $n$},
\end{equation}
and we define 
\[w:=\limsup_{X\to\infty}\frac{\#\{\ell\leq X:  \ \spt \ \text{ has a  congruence at $\ell$}\}}{X/\log X}.\]
In \cite{AhlBoy} it was shown that the partition function has a congruence at $\ell$ only if $\ell=5, 7$ or $11$.
For the $\spt$ function we can prove
\begin{theorem}\label{thm:no_cong}   We have  $w=0$.
Moreover, for $\ell<10^{11}$, $\spt$ has a  congruence only at $\ell=5, 7$ and $13$.

\end{theorem}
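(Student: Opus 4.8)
The plan is to turn a congruence at $\ell$ into the vanishing of an explicit object modulo $\ell$, use Theorem~\ref{prop:main_prop} to show that such vanishing forces exceptional arithmetic structure, and then argue that this structure persists for only a density-zero set of primes. First I would record the reduction that makes everything concrete. For $\ell\geq 5$ the term $\ell n\, p\!\leg{\ell n+1}{24}$ vanishes modulo $\ell$, so
\[
a(\ell n)\equiv -12\,\spt\!\leg{\ell n+1}{24}\pmod\ell ,
\]
and since $12$ is a unit modulo $\ell$ we obtain
\[
\spt\text{ has a congruence at }\ell \iff a(\ell n)\equiv 0\pmod\ell\ \text{ for all }n \iff \overline{f}=\overline{\Theta^{\ell-1}f}.
\]
In words, a congruence is exactly the statement that the part of $f$ supported on exponents divisible by $\ell$ dies modulo $\ell$.

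Next I would feed this into Theorem~\ref{prop:main_prop}. Expanding \eqref{eq:fell2} modulo $\ell$ and using $x^{(\ell-1)/2}\equiv\leg{x}\ell\pmod\ell$ for $\ell\nmid x$, the inner factor becomes
\[
f-\leg{-24}\ell\,\Theta^{\frac{\ell-1}2}f\equiv 2\!\!\sum_{\leg{-n}\ell=-1}\!\! a(n)q^{\frac n{24}}\ +\sum_{\ell\mid n}a(n)q^{\frac n{24}}\pmod\ell .
\]
Under a congruence the second sum vanishes, so Theorem~\ref{prop:main_prop} yields the rigid identity
\[
\overline{f_\ell}=2\,\overline{\eta}\!\!\sum_{\leg{-n}\ell=-1}\!\! a(n)q^{\frac n{24}}\in\overline{S}_{\ell+1}.
\]
Thus the weight $\ell+\tfrac12$ projection producing $\overline{f_\ell}$ is supported on a single quadratic class modulo $\ell$; equivalently it is invariant, up to a fixed sign, under the quadratic twist by $\leg\cdot\ell$. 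Passing through the Shimura correspondence and attaching to the Hecke constituents of $\overline{f_\ell}$ their mod-$\ell$ Galois representations, I would argue that this twist invariance forces each such representation $\rho$ to satisfy $\rho\otimes\leg\cdot\ell\cong\rho$, hence to be dihedral (induced from a quadratic field) or reducible. In other words a congruence at $\ell$ pushes $\overline{f_\ell}$ into an \emph{exceptional} eigensystem, exactly as a partition congruence forces the Eisenstein eigensystem in \cite{AhlBoy}; the difference is that here the space $\overline{S}_{\ell+1}$ grows with $\ell$, so one gets sparsity rather than a finite list.

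To deduce $w=0$ I would control the exceptional set by a \emph{fixed} Galois datum. The shadow of $f$ has weight $1/2$, hence by Serre--Stark is a theta series attached to a single quadratic datum; I would relate the exceptional condition above—concretely, $\overline{\eta}\,(f-\overline{\Theta^{\ell-1}f})\equiv 0$—back to the nonvanishing modulo $\ell$ of the coefficients of this fixed shadow and of its Shimura lift. This lets me set up an increasing family of essentially independent tests, each excluding all but finitely many primes in a prescribed union of quadratic classes (the finitely many exceptions being the prime divisors of a fixed nonzero integer); the surviving set then has density at most $2^{-k}$ for every $k$, so $w=0$. For the range $\ell<10^{11}$ I would use the same dichotomy to replace the prohibitively large Sturm bound in $\overline{\eta}^{r_\ell}\overline{M}_{(\ell-r_\ell)/2+1}$ by a bounded, $\ell$-independent list of coefficient congruences: an exceptional $\overline{f_\ell}$ is pinned to a short list of candidate dihedral/Eisenstein-type forms, and running these finitely many tests over all primes below $10^{11}$ should leave only $\ell=5,7,13$.

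The main obstacle is the structural step of the third paragraph: proving rigorously that single-class support of the half-integral weight projection forces the residual representation of $\overline{f_\ell}$ to be exceptional, and—because the weight $\ell+1$ grows with $\ell$, so one cannot simply invoke the finiteness of exceptional primes for a single fixed form—packaging this in terms of the fixed shadow so that both the sparsity estimate $o(X/\log X)$ and an \emph{effective}, boundedly checkable criterion follow simultaneously. Confirming that the resulting criterion is genuinely independent of $\ell$, and hence that the computation to $10^{11}$ is feasible, is the crux of the argument.
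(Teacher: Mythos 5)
There is a genuine gap at exactly the point you flag as ``the main obstacle.'' Your first two paragraphs match the paper: a congruence at $\ell$ is equivalent to $f\big|U_\ell\equiv 0$, i.e.\ $\overline f=\overline{\Theta^{\ell-1}f}$, and then $\overline{f_\ell}=\overline{\eta}\sum a(n)\left(1-\pmfrac{-n}{\ell}\right)q^{n/24}$ is supported on the nonresidue classes. But the conclusion you need is not that $\overline{f_\ell}$ lies in an ``exceptional eigensystem'' --- it is that $\overline{f_\ell}=0$, whence $a(n)\equiv 0\pmod\ell$ for \emph{every} $n$ with $\pmfrac{-n}{\ell}\neq 1$. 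The paper gets this (its Proposition~\ref{nocong_prop}) by a short, elementary filtration argument that your proposal misses entirely: writing $g_\ell=f_\ell\Delta^{(\ell^2-1)/24}$, the hypothesis $\overline f=\overline{\Theta^{\ell-1}f}$ forces $\Theta^{\frac{\ell-1}{2}}g_\ell\equiv -\pmfrac{-24}{\ell}g_\ell$, so $w(\Theta^{\frac{\ell-1}{2}}g_\ell)=w(g_\ell)$; by Lemma~\ref{filtprop} this rules out the top filtration $\frac{\ell^2+1}{2}+\ell$ and drops $w(g_\ell)$ to at most $2+\frac{\ell^2-1}{2}$, while $\overline{g_\ell}$ vanishes to order $\frac{\ell^2+23}{24}$, exceeding the dimension bound in that weight. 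Hence $g_\ell\equiv 0$. Your substitute route --- Shimura lifts and mod-$\ell$ Galois representations with $\rho\otimes\leg{\cdot}{\ell}\cong\rho$ --- is not only unfinished by your own admission, it analyzes the eigensystems of an object that the correct argument shows is identically zero, and even if completed it would yield a statement about eigensystems rather than the coefficientwise vanishing $a(n)\equiv 0$ that the rest of the argument consumes.

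Once Proposition~\ref{nocong_prop} is in hand, your density sketch does collapse to the paper's argument, but in your write-up it is not yet a proof: the ``increasing family of essentially independent tests'' should be made concrete as follows. Take the first $N$ primes $p_j\equiv 23\pmod{24}$; a congruence at $\ell$ forces, for each $j$, either $\ell\mid a(p_j)$ or $\pmfrac{\ell}{p_j}=1$ (reciprocity converts $\pmfrac{-p_j}{\ell}$ into $\pmfrac{\ell}{p_j}$ for $p_j\equiv 23\pmod{24}$). The first alternative contributes a finite set $E_N$, and the second a set of primes of density $2^{-N}$, giving $w\le 2^{-N}$ for all $N$. The same criterion --- exhibit one $n\equiv 23\pmod{24}$ with $\pmfrac{\ell}{n}\neq 1$ and $a(n)\not\equiv 0\pmod\ell$ --- is what makes the $\ell<10^{11}$ computation feasible with only the first $50$ coefficients of $f$; no Sturm bound, dihedral classification, or $\ell$-independent Galois datum is needed.
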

The bound $10^{11}$  is obtained from a few hours of computation using the first $50$ coefficients of $f$ as described in  Section~\ref{sec:no_cong} and could easily be improved.

Our method uses the properties of a family of mock modular forms on $\SL_2(\Z)$ introduced by Zagier \cite{Zag1} together 
with the theory of modular forms modulo $\ell$.  We begin in the next section by describing these mock modular forms
and developing the necessary background before turning to the proofs in the following sections.


\section{Background}
By work of Bringmann \cite{Bringmann} and   Zagier \cite[\S6]{Zag1} (see also \cite[\S3]{AA2} for example)
it is known that
\begin{equation} \label{eq:fdef}
f(z)=\sum  a(n)q^\frac{n}{24} 
=  - \sum_{n=-1}^\infty \(12\spt\pmfrac{n+1}{24} + n p\pmfrac{n+1}{24} \) q^\frac{n}{24}
\end{equation}
is a mock modular form of weight $3/2$ on $\SL_2(\Z)$ whose multiplier is  conjugate to that of the eta-function. Recall \cite{Zag2} that the $n$-th Rankin-Cohen bracket  is given by 
\[
[g,h]_{n} := \sum_{r=0}^{n} (-1)^r \binom{n+k_1-1}{n-r} \binom{n+k_2 -1}{r} \Theta^{r} g \Theta^{n-r} h,
\]
where $k_1$ and $k_2$ are the respective weights of the modular forms $g$ and $h$.

Let $E_2$ be the usual quasi-modular Eisenstein series of weight $2$ and
for even $k\geq 2$ define
\[
F_{k}  := \sum_{n \neq 0} (-1)^n  \pmfrac{-3}{n-1}  n^{k-1} \frac{q^{n(n+1)/6}}{1-q^n} = - \sum_{r >s> 0}  \pmfrac{12}{r^2-s^2} s^{k-1} q^{rs/6}.
\]
Zagier \cite[\S 6]{Zag1} described a family of mock modular forms on $\SL_2(\Z)$ in every even weight.
\begin{proposition}\label{thm:Zag}
We have
\begin{enumerate}
\item \[12 {F}_{2} +   f \eta   = E_2. \]
\item For all  $n\geq 1$, the function
\[
12 {F}_{2n+2}  + 24^n \binom{2n}{n}^{-1} [f, \eta]_{n}
\]
is a modular form of weight $2n+2$ on $\SL_2(\Z)$.
\end{enumerate}
\end{proposition}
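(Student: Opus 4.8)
The plan is to exploit the modular completion of $f$ as a harmonic Maass form, pair it with $\eta$ through a completed Rankin--Cohen bracket, and then extract a holomorphic object by holomorphic projection. First I would recall that, since $f$ is mock modular of weight $3/2$ with multiplier conjugate to that of $\eta$, it admits a completion $\widehat{f}=f+f^{*}$ that transforms as a (nonholomorphic) modular form of weight $3/2$ on $\SL_2(\Z)$, whose shadow $\xi_{3/2}(\widehat{f})$ is a multiple of the weight-$1/2$ form $\eta$ (the only such forms on $\SL_2(\Z)$ are theta series, and $\eta$ is the relevant one). Concretely $f^{*}$ is the nonholomorphic Eichler integral of $\overline{\eta}$, whose Fourier expansion in terms of incomplete gamma functions is explicit. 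Because $\eta$ is genuinely holomorphic of weight $1/2$, the Rankin--Cohen bracket built from $\widehat{f}$ and $\eta$ (using covariant, rather than naive, derivatives on the real-analytic factor) is a real-analytic modular form $\Phi_n$ of weight $2n+2$ on $\SL_2(\Z)$.

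Next I would apply holomorphic projection $\pi_{\mathrm{hol}}$, which sends a real-analytic modular form of weight $k\geq 2$ and moderate growth to a holomorphic modular form of weight $k$ (quasimodular when $k=2$). Splitting $\widehat{f}=f+f^{*}$ expresses $\pi_{\mathrm{hol}}(\Phi_n)$ as a sum of two contributions. The contribution of the holomorphic piece $f$ reduces, after accounting for the discrepancy between covariant and naive derivatives (the classical fact that Rankin--Cohen brackets arise as holomorphic projections of products of nearly holomorphic forms), to the naive bracket $24^{n}\binom{2n}{n}^{-1}[f,\eta]_n$. The contribution of $f^{*}$ is then computed from the integral formula for $\pi_{\mathrm{hol}}$: inserting the Fourier expansion of $f^{*}$, multiplying out against $\eta$, and evaluating the resulting beta/gamma integrals $\int_0^\infty c(m,y)\,e^{-4\pi m y}\,y^{k-2}\,dy$ (with $y:=\operatorname{Im}(z)$) term by term. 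Since $\pi_{\mathrm{hol}}(\Phi_n)$ is holomorphic modular of weight $2n+2$ by construction, statement (2) follows once the $f^{*}$-contribution is identified with $12F_{2n+2}$. For statement (1) I would run the same argument with $n=0$: here $\Phi_0=\widehat{f}\,\eta$ has weight $2$, so $\pi_{\mathrm{hol}}(\Phi_0)$ is a weight-$2$ quasimodular form on $\SL_2(\Z)$; as that space is spanned by $E_2$ and the two Fourier contributions are $f\eta$ and $12F_2$, the identity $12F_2+f\eta=E_2$ follows after fixing the constant from the constant term.

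The main obstacle is the explicit evaluation of the $f^{*}$-contribution to the holomorphic projection and its identification with $F_{2n+2}$. This requires the precise Fourier expansion of the nonholomorphic Eichler integral of $\overline{\eta}$ (incomplete gamma functions indexed by the support of $\eta$, i.e.\ by exponents of the form $rs/6$), careful bookkeeping of the binomial coefficients produced by the Rankin--Cohen bracket against those from the projection integral, and recognition of the resulting lattice sum as $-\sum_{r>s>0}\pmfrac{12}{r^{2}-s^{2}}s^{2n+1}q^{rs/6}$. I would also verify the growth hypothesis justifying holomorphic projection (with the usual extra care in the weight-$2$ case, where regularization and the quasimodular anomaly must be handled) and track the normalization constant $24^{n}\binom{2n}{n}^{-1}$, which emerges precisely from comparing the covariant-derivative normalization of $\Phi_n$ with the naive $\Theta$-bracket appearing in the statement.
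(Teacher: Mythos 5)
Your plan — complete $f$ to a harmonic Maass form, form the Rankin--Cohen bracket with $\eta$, apply holomorphic projection, and identify the contribution of the nonholomorphic part with $12F_{2n+2}$ (with the weight-$2$ case $n=0$ handled separately via the quasimodular anomaly to produce $E_2$) — is exactly the argument the paper sketches, citing Mertens for the holomorphic projection of brackets in weights $(3/2,1/2)$. The approach is correct and essentially identical to the paper's.
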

We have corrected a typographical error in the first statement.
The proof, which uses holomorphic projection,  is not given in \cite{Zag1}.  
A sketch of a proof of the first assertion is described in \cite{AA1}.
If
 ${F} = f + f^{-}$ is the completion of the mock modular form $f$, then for $n\geq 1$ we have
\[
\pi_{\operatorname{hol}} ( [{F}, \eta ]_{n} ) = [f,\eta]_n +  \pi_{\operatorname{hol}} ( [f^- ,\eta]_{n} )\in M_{2n+2}.
\]
A description of the holomorphic projection of Rankin-Cohen brackets in weight $(3/2, 1/2)$ is given by 
Mertens \cite[\S5]{Mer} (for these weights there is  quite a bit of simplification).
Zagier's result follows from computing $\pi_{\operatorname{hol}} ( [f^- ,\eta]_{n} )$ explicitly in terms of ${F}_{2n+2}$.

Finally,  we require some basic facts from  the theory of modular forms modulo $\ell$.
Each  $g\in M_k$ has a  filtration
 defined by 
\[
w(g):=\inf \{ k'\ : \ \overline g \in \overline{M}_{k'}\}.
\]
Define the   $U$-operator  by its action on $q$-series:
\[\left(\sum b(n)q^\frac n{24}\right)\big | U_\ell:=\sum b(\ell n)q^\frac n{24}.\]
These  facts about filtrations   can be found in \cite{SD} and  \cite[\S 2.2]{Serre}.
\begin{lemma}\label{filtprop}  If $g\in M_k$ then the following are true.
\begin{enumerate}
\item  $w(g)\equiv k\pmod{\ell-1}$.
\item $\overline{\Theta g}\in \overline M_{k+\ell+1}$.
\item $w(\Theta g)\leq w(g)+\ell+1$, with equality if and only if $w(g)\not\equiv 0\pmod \ell$.
\item $w(g\big |U_\ell) \leq \ell + \frac{ w(g) -1}{\ell}$.
\end{enumerate}
\end{lemma}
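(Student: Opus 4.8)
The plan is to deduce all four statements from the standard structure theory of modular forms modulo $\ell$ (Swinnerton-Dyer~\cite{SD}, Serre~\cite{Serre}), whose load-bearing inputs are the congruences $\overline{E_{\ell-1}}=1$ and $\overline{E_{\ell+1}}=\overline{E_2}$ in $\mathbb{F}_\ell[\![q]\!]$, together with the fact that the \emph{Hasse invariant} $A$ (the isobaric weight-$(\ell-1)$ polynomial in $E_4,E_6$ representing $E_{\ell-1}$) is squarefree and coprime to the weight-$(\ell+1)$ polynomial $B$ representing $E_{\ell+1}$. Since $\overline{E_{\ell-1}}=1$, multiplication by $E_{\ell-1}$ fixes reductions while raising weight by $\ell-1$, so $\overline{M}_{k}\subseteq\overline{M}_{k+\ell-1}$; this already shows the filtration is well defined. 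For statement (1) I would invoke Swinnerton-Dyer's theorem that the reduction map $\mathbb{F}_\ell[X,Y]\to\overline{M}\subset\mathbb{F}_\ell[\![q]\!]$, $X\mapsto\overline{E_4}$, $Y\mapsto\overline{E_6}$, has kernel the principal ideal $(A-1)$. If $\overline g$ has isobaric polynomial representatives of weights $k$ and $k'$, their difference lies in $(A-1)$; comparing isobaric components and using that $A-1$ links only weights differing by $\ell-1$ forces $k\equiv k'\pmod{\ell-1}$. As $g\in M_k$ gives $w(g)\le k$ with $w(g)\equiv k$, statement (1) follows.

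For statement (2) I would use the Serre derivative $\partial_k g:=\Theta g-\tfrac{k}{12}E_2 g\in M_{k+2}$ (Ramanujan's identities). Reducing and replacing $\overline{E_2}$ by $\overline{E_{\ell+1}}$ and $1$ by $\overline{E_{\ell-1}}$, the genuine weight-$(k+\ell+1)$ form
\[
\Phi:=\tfrac{k}{12}\,E_{\ell+1}\,g+E_{\ell-1}\,\partial_k g\in M_{k+\ell+1}
\]
satisfies $\overline{\Phi}=\overline{\Theta g}$, so $\overline{\Theta g}\in\overline{M}_{k+\ell+1}$, and applying this to a filtration-attaining representative $G$ of weight $m:=w(g)$ gives $w(\Theta g)\le w(g)+\ell+1$. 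For the dichotomy in statement (3): if $\ell\mid m$ then $\tfrac m{12}\equiv0$, so $\overline{\Theta g}=\overline{\partial_m G}\in\overline{M}_{m+2}$ and the filtration strictly drops. If $\ell\nmid m$, I would pass to the associated graded ring $\operatorname{gr}\overline{M}\cong\mathbb{F}_\ell[X,Y]/(A)$, where the symbol of $\Theta$ sends an isobaric $P$ of weight $m$ to $\tfrac{m}{12}BP$ (by Euler's relation applied to $\partial X,\partial Y$); since $w(G)=m$ is minimal the leading polynomial $P$ is not divisible by $A$, and because $A$ is squarefree and coprime to $B$ we get $A\nmid BP$, so the leading term of $\Theta G$ is nonzero of weight $m+\ell+1$ and $w(\Theta g)=m+\ell+1$.

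For statement (4) I would exploit Fermat: since $n^{\ell-1}\equiv1$ for $\ell\nmid n$ and $\equiv0$ otherwise, $\overline{g-\Theta^{\ell-1}g}$ is the part of $\overline g$ supported on exponents divisible by $\ell$, which equals $\overline{(g|U_\ell)|V_\ell}$, while the Frobenius identity $\overline{h|V_\ell}=\overline h^{\,\ell}$ gives $\overline{g|U_\ell}^{\,\ell}=\overline{g-\Theta^{\ell-1}g}$. Iterating statement (3) yields $w(\Theta^{\ell-1}g)\le w(g)+(\ell-1)(\ell+1)$, so the right-hand side has filtration at most $w(g)+\ell^2-1$. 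Finally $w(h^\ell)=\ell\,w(h)$ for any $h$ (the leading term $P$ of $h$ satisfies $A\nmid P$, hence $A\nmid P^\ell$ by squarefreeness of $A$, so the top symbol $P^\ell$ survives), whence $\ell\,w(g|U_\ell)\le w(g)+\ell^2-1$, which is statement (4). Here one first checks $\overline{g|U_\ell}\in\overline M$, which follows from the displayed relation together with the fact that $\overline{\,\cdot\,|V_\ell}$ is the Frobenius onto $\overline M\cap\mathbb{F}_\ell[\![q^{\ell}]\!]$.

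\textbf{Main obstacle.} The Fermat and Ramanujan bookkeeping is routine; the genuine content, which I would isolate as a preliminary lemma, is the input that $A$ is squarefree and coprime to $B=E_{\ell+1}$ (equivalently, $E_{\ell-1}$ has simple zeros and shares no zero with $E_{\ell+1}$ on $X(1)$ in characteristic $\ell$). This is precisely what makes the symbol of $\Theta$ and the Frobenius power non-degenerate, and hence what powers both the equality case of statement (3) and the identity $w(h^\ell)=\ell\,w(h)$ used for statement (4).
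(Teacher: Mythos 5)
Your proposal is correct, and it essentially reconstructs the standard Swinnerton--Dyer/Serre arguments that the paper itself invokes without proof (it simply cites \cite{SD} and \cite[\S 2.2]{Serre} for these facts): the structure theorem with kernel $(A-1)$ for (1), the Serre derivative together with $\overline{E_{\ell-1}}=1$, $\overline{E_{\ell+1}}=\overline{E_2}$ for (2) and (3), and the Fermat/Frobenius identity $\overline{g|U_\ell}^{\,\ell}=\overline{g-\Theta^{\ell-1}g}$ with $w(h^\ell)=\ell\, w(h)$ for (4). You also correctly isolate the genuine input --- that $A$ is squarefree and coprime to $B$ --- and your one appeal to an unproved fact (that Frobenius surjects onto $\overline{M}\cap\mathbb{F}_\ell[\![q^\ell]\!]$, so that $\overline{g|U_\ell}\in\overline{M}$) is itself standard and contained in the same cited references.
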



\section{Proof of Theorem~\ref{prop:main_prop}}

From the definitions  \eqref{eq:fdef}  and  \eqref{eq:fell2}  we have
\begin{equation}\label{eq:fell}
f_{\ell}   \equiv \eta\(\sum a(n)\left(1-\pmfrac{-n}\ell\right) q^\frac{n}{24}\)\pmod\ell.
\end{equation}
We have $E_2\equiv E_{\ell+1}\pmod \ell$ and $ F_2\equiv  F_{\ell+1}\pmod\ell$.
Set 
\[c(n) := 24^n \binom{2n}{n}^{-1}.\]
From Proposition~\ref{thm:Zag} it follows that 
\begin{equation}\label{eq:rc_cong}
\overline{
f\eta-c\pmfrac{\ell-1}2[ f, \eta]_{\frac{\ell-1}{2}}}\in \overline M_{\ell+1}.
\end{equation}
From the definition we have
\[
[ f, \eta]_\frac{\ell-1}2 := \sum_{r=0}^{\frac{\ell-1}2} (-1)^r \binom{\ell/2}{(\ell-1)/2-r} \binom{\ell/2-1}{r} \Theta^{r} f \, \Theta^{\frac{\ell-1}2-r} \eta.
\]
We have
\[\binom{ \ell /2}{ (\ell-1)/2 -r } \equiv 0 \pmod{ \ell }, \qquad 0\leq r<\frac{\ell-1}2\]
and 
\[\binom{ \ell-1}{(\ell-1)/2}\equiv \binom{(\ell -2)/2}{(\ell-1)/2}\equiv \pmfrac{-1}\ell\pmod\ell.\]
Therefore
\[c\pmfrac{\ell-1}2[ f, \eta]_\frac{\ell-1}2\equiv \pmfrac{-24}{\ell}    \Theta^{\frac{\ell-1}{2}} f  \cdot \eta \pmod{ \ell },
\]
and Theorem~\ref{prop:main_prop} follows from \eqref{eq:rc_cong}, after noting that
\begin{equation}\label{eq:fqexp}
\pmfrac{-24}{\ell} \Theta^{\frac{\ell-1}{2}} f \equiv q^{-1/24} + O \(q^{\frac{23}{24}}\)\pmod\ell.
\end{equation}

\section{Deduction of  Theorem~\ref{main_thm}}

We define $g_\ell $ by
\begin{equation} \label{Eqn:Fell}
g_\ell  := f_\ell \,\Delta^{\frac{\ell^2-1}{24}} , 
\end{equation}
so that $\overline g_\ell\in \overline S_{\ell+1+\frac{\ell-1}2}$.
Using Theorem~\ref{prop:main_prop} and  Lemma~\ref{filtprop}  we find that 
\[
w (g_\ell \big|U_\ell )
\leq \ell +1 +\frac{\ell^2 -1}{2\ell} 
\leq \mfrac{3}{2} \ell +1.
\]
Since $w ( g_\ell ) \equiv 2 \pmod{\ell-1}$, we conclude that
\[
\overline{g_\ell}\big|U_\ell = \overline{ f}\big|{U_\ell}  \cdot \overline{\eta}^{\ell}  \in \overline{S}_{\ell+1}.
\]
Finally, we find that the   $q$-expansion has the form
\[
\overline{g}_{\ell}\big|U_\ell = c\, q^{\frac{r_{\ell} +\ell}{24}} + \cdots
\]
for some $c$. Therefore
\[
\overline{g_\ell}\big|U_\ell \in \overline{\Delta}^{\frac{r_{\ell} +\ell}{24}} \overline{ M}_{\ell+1 - \frac{r_{\ell} + \ell}{2}},
\]
and Theorem~\ref{main_thm} follows.


\section{Proof of Theorem~\ref{thm:no_cong}}\label{sec:no_cong}
Let $\ell\geq 5$ be prime and let $f$ and $f_\ell$ be defined as in \eqref{eq:fdef} and \eqref{eq:fell}.
We begin with a proposition (this can also be deduced from \cite[Thm. 1.1]{Ono1}, \cite[Thm. 1.2]{ABL} or \cite[Cor. 3.2]{AK}).
\begin{proposition} \label{nocong_prop}
If $f\big |U_\ell\equiv 0\pmod \ell$ then $f_\ell\equiv 0\pmod \ell$.
\end{proposition}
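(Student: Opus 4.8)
The plan is to prove the intermediate fact that, under the hypothesis, $f_\ell$ is annihilated by $U_\ell$, and then to use Theorem~\ref{prop:main_prop} to force $\overline{f_\ell}$ to have too small a filtration to be a nonzero cusp form of weight $\ell+1$.

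First I would record the shape of $f_\ell$ coming from \eqref{eq:fell}. Since $1-\leg{-n}{\ell}$ equals $0$, $1$, or $2$ according as $-n$ is a nonzero square, is divisible by $\ell$, or is a nonsquare $\bmod\,\ell$, and since the hypothesis $f\big|U_\ell\equiv0$ says precisely that $a(n)\equiv0$ whenever $\ell\mid n$, the middle case contributes nothing and
\[
f_\ell\equiv2\,\eta\sum_{\leg{-n}{\ell}=-1}a(n)q^{\frac n{24}}=:2\eta f_-\pmod\ell .
\]

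The key step is to prove that $f_\ell\big|U_\ell\equiv0\pmod\ell$, and here I would use the classical theta expansion $\eta=\sum_{j\geq1}\leg{12}{j}q^{\frac{j^2}{24}}$. In the product $\eta f_-$ a term $q^{\frac{j^2+n}{24}}$ survives $U_\ell$ only when $j^2+n\equiv0\pmod\ell$, i.e. $-n\equiv j^2\pmod\ell$; but then $\leg{-n}{\ell}=\leg{j^2}{\ell}\in\{0,1\}$, which is incompatible with the condition $\leg{-n}{\ell}=-1$ defining $f_-$. Hence every contributing term vanishes and $f_\ell\big|U_\ell\equiv2(\eta f_-)\big|U_\ell\equiv0$. (It is exactly the hypothesis that discards the $\ell\mid n$ part of $f$, whose $\eta$-multiple does \emph{not} die under $U_\ell$, so this step genuinely uses $f\big|U_\ell\equiv 0$.)

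Finally I would combine this with Theorem~\ref{prop:main_prop} through a filtration/theta-cycle argument. Since $\overline{f_\ell}\in\overline S_{\ell+1}$ we have $w(f_\ell)\leq\ell+1$ and, by Lemma~\ref{filtprop}(1), $w(f_\ell)\equiv2\pmod{\ell-1}$; as $M_2=\{0\}$ forces $\overline M_2=\{0\}$, a nonzero $f_\ell$ must have filtration exactly $\ell+1$, so assume this. Because $f_\ell\big|U_\ell\equiv0$, the $\ell$-divisible part of its $q$-expansion vanishes, whence $\Theta^{\ell-1}f_\ell\equiv f_\ell$ and so $w(\Theta^{\ell-1}f_\ell)=\ell+1$. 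On the other hand, starting from $w(f_\ell)=\ell+1\not\equiv0\pmod\ell$ and applying Lemma~\ref{filtprop}(3) repeatedly, each intermediate filtration $w(\Theta^{j}f_\ell)=(j+1)(\ell+1)\equiv j+1\pmod\ell$ is nonzero for $0\le j\le\ell-2$, so no drop occurs and $w(\Theta^{\ell-1}f_\ell)=\ell(\ell+1)$. Since $\ell(\ell+1)\neq\ell+1$, this contradiction forces $f_\ell\equiv0$. I expect the main obstacle to be isolating and verifying the claim $f_\ell\big|U_\ell\equiv0$: the reduction to $2\eta f_-$ and the elementary square-versus-nonsquare observation are what make everything work, and getting the bookkeeping of the quadratic symbol right is the crucial point, after which the concluding theta-cycle computation is routine given Lemma~\ref{filtprop}.
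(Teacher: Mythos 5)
Your proof is correct, but it follows a genuinely different route from the paper's. The paper never isolates the statement $f_\ell\big|U_\ell\equiv 0\pmod\ell$; instead it passes to the high-weight form $g_\ell=f_\ell\Delta^{(\ell^2-1)/24}$, uses $f\equiv\Theta^{\ell-1}f$ to derive the eigen-relation $\Theta^{(\ell-1)/2}g_\ell\equiv-\leg{-24}{\ell}g_\ell\pmod\ell$, concludes from Lemma~\ref{filtprop}(3) that the filtration must drop somewhere in that half theta cycle so that $w(g_\ell)\leq 2+\frac{\ell^2-1}{2}$, and then kills $g_\ell$ by comparing its order of vanishing $q^{(\ell^2+23)/24}$ against the dimension bound $\dim S_k\leq\frac{\ell^2-1}{24}$. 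You instead work directly with $f_\ell$ in weight $\ell+1$: the reduction to $2\eta f_-$ together with the theta expansion $\eta=\sum\leg{12}{j}q^{j^2/24}$ and the square-versus-nonsquare observation gives $f_\ell\big|U_\ell\equiv0$, after which $\Theta^{\ell-1}f_\ell\equiv f_\ell$ collides with the full theta cycle $w(\Theta^{\ell-1}f_\ell)=\ell(\ell+1)$. Your argument buys a cleaner endgame --- no dimension count and no need for the auxiliary form $g_\ell$ or the initial $q$-expansion \eqref{eq:fqexp} --- at the price of the extra lacunarity input about $\eta$; the paper's version reuses the object $g_\ell$ already introduced for the deduction of Theorem~\ref{main_thm} and stays entirely within the filtration formalism of Lemma~\ref{filtprop}. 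Both are sound; the intermediate fact $f_\ell\big|U_\ell\equiv0$ you prove is a nice standalone observation that does not appear explicitly in the paper.
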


\begin{proof}
Suppose that $f\big |U_\ell\equiv 0\pmod \ell$.  Then  $f \equiv \Theta^{\ell-1} f \pmod{\ell}$.
Using this with \eqref{eq:fell} and \eqref{Eqn:Fell} we obtain 
\[\Theta^{\frac{\ell-1}{2}} g_\ell \equiv -\pmfrac{-24}{\ell}  g_\ell \pmod{\ell}.\]
In particular we have 
\[
w ( \Theta^{\frac{\ell-1}{2}} g_\ell )=w (g_\ell).
\]
By Theorem~\ref{prop:main_prop} we have $\overline{g_\ell}\in \overline S_{\frac{\ell^2 +1}{2}+\ell}$.
If it were the case that 
 \[w(g_\ell) =\frac{\ell^2 +1}{2}+\ell\equiv \frac{\ell+1}2\pmod \ell, \]
then Lemma~\ref{filtprop} would give the contradiction
\[w (\Theta^{\frac{\ell-1}{2}} g_\ell ) =  \frac{\ell^2+1}{2}+\ell + \frac{\ell-1}{2} (\ell+1) \neq w (g_\ell).\]
 It follows that
\[
w (g_\ell) \leq   2+ \frac{\ell^2-1}{2}.
\]
By  \eqref{eq:fqexp} we have 
\[
\overline g_\ell = c \, q^{\frac{\ell^2 +23}{24}} + \cdots 
\]
for some  $c$. Since $\dim S_k \leq \frac{\ell^2 -1}{24}$ for $k \leq 2 + \frac{\ell^2-1}{2}$, it follows that 
\[
g_\ell \equiv 0 \pmod{\ell}.
\]
\end{proof}

Finally, we prove Theorem~\ref{thm:no_cong}.
\begin{proof}[Proof of Theorem~\ref{thm:no_cong}]
Given a prime $\ell$, 
it follows from Proposition~\ref{nocong_prop} that if there is an integer $n\equiv 23\pmod{24}$ such that
\begin{equation}\label{eq:cong_test}
\pmfrac{\ell}{n}\neq 1\ \ \text{and}\ \ a(n)\not\equiv 0\pmod \ell,
\end{equation}
then $\spt$ does not have a congruence at $\ell$.
For each  $\ell< 10^{11}$ other than $5$, $7$, and $13$,  we  find an integer $n$ satisfying \eqref{eq:cong_test} among the first $50$ candidates;
this gives the  second assertion
 of Theorem~\ref{thm:no_cong}.  

To prove the first assertion, fix a positive integer $N$, and let $p_1,\dots,  p_N$ be the first $N$ primes 
which are $\equiv 23\pmod{24}$.  Let $E_N$ be the finite set of primes $\ell\geq 5$ which divide $\displaystyle\prod_{1\leq j\leq N} a(p_j)$.
From \eqref{eq:cong_test}, we see that if $\spt$ has a congruence at $\ell$, then either  $\ell\in E_N$ or
$\ell$ is in the the set $Q_N$ defined by the quadratic conditions
\begin{equation}\label{eq:quad_cond}
\pmfrac{\ell}{p_j}=  1,\ \ \  1\leq j\leq N.
\end{equation}
It follows that 
\[\#\{\ell\leq X:  \ \spt \ \text{ has a  congruence at $\ell$}\}\leq \#E_N+\#\{\ell\leq X: \ell\in Q_N\}\sim \frac1{2^N}\frac{X}{\log X}.\]
Therefore $w\leq \frac1{2^N}$.  The theorem follows.
\end{proof}


\bibliographystyle{amsplain}
\bibliography{spt_bib.bib}

\end{document}